\renewcommand{\qed}{\hfill\small{$\square$}\normalsize}
\theoremstyle{definition}
\newtheorem{lemma}{Lemma}[section]
\newtheorem{proposition}[lemma]{Proposition}
\newtheorem{theorem}[lemma]{Theorem}
\newtheorem{corollary}[lemma]{Corollary}
\numberwithin{equation}{section}
\renewcommand{\qed}{\hfill\small{$\square$}\normalsize}
\DeclareFixedFont{\Acknowledgment}{OT1}{cmr}{bx}{n}{14pt}
\begin{document}

\title{\bf Kazdan-Warner equation on graph in the negative case}
\author{Huabin Ge}
\maketitle

\begin{abstract}
Let $G=(V,E)$ be a connected finite graph. In this short paper, we reinvestigate the Kazdan-Warner equation
$$\Delta u=c-he^u$$
with $c<0$ on $G$, where $h$ defined on $V$ is a known function. Grigor'yan, Lin and Yang \cite{GLY} showed that if the Kazdan-Warner equation has a solution, then $\overline{h}$, the average value of $h$, is negative. Conversely, if $\overline{h}<0$, then there exists a number $c_-(h)<0$, such that the Kazdan-Warner equation is solvable for every $0>c>c_-(h)$ and it is not solvable for $c<c_-(h)$. Moreover, if $h\leq0$ and $h\not\equiv0$, then $c_-(h)=-\infty$.

Inspired by Chen and Li's work \cite{CL}, we ask naturally:
\begin{center}
Is the Kazdan-Warner equation solvable for $c=c_-(h)$?
\end{center}
In this paper, we answer the question affirmatively. We show that if $c_-(h)=-\infty$, then $h\leq0$ and $h\not\equiv0$. Moreover, if $c_-(h)>-\infty$, then there exists at least one solution to the Kazdan-Warner equation with $c=c_-(h)$.
\end{abstract}

%\textbf{Mathematics Subject Classification (2010).} 52C25, 52C26, 53C44.\\
%\tableofcontents

\section{Introduction}\label{Introduction}
It is well known that the following equation
\begin{equation*}\label{equ-smooth-simplify}
\Delta_gu=c-he^{u}
\end{equation*}
gives a description of the conformal deformation of the smooth metric $g$ on a $2$-dimensional closed Riemannian manifold $(M,g)$. Kazdan and Warner had given satisfying characterizations to the solvability of the above equation.

Grigor'yan, Lin and Yang \cite{GLY} first studied the corresponding equation on a connected finite graph $G=(V,E)$, where $V$ is the vertex set and $E$ is the edge set.
Denote $C(V)$ as the set of real functions on $V$. The discrete graph Laplacian $\Delta:C(V)\rightarrow C(V)$ is
\begin{equation*}
\Delta f_i=\frac{1}{\mu_i}\sum\limits_{j\thicksim i}\omega_{ij}(f_j-f_i)
\end{equation*}
for $f\in C(V)$ and $i\in V$, where $\mu:V\rightarrow(0,+\infty)$ is a fixed vertex measure, and $\omega:E\rightarrow(0,+\infty)$ is a fixed symmetric edge measure on $G$. Grigor'yan, Lin and Yang \cite{GLY} considered
\begin{equation}\label{def-KW-eq}
\Delta u=c-he^{u},
\end{equation}
where $c\in\mathds{R}$, and $h\in C(V)$. For any $f\in C(V)$, denote $\overline{f}$ as the average value of $f$ with respect to the measure $\mu$.
Let us summarize the results of Grigor'yan, Lin and Yang \cite{GLY}.
\begin{itemize}
  \item $c=0$ case. Assume $h\not\equiv0$, then the equation (\ref{def-KW-eq}) has a solution if and only if $h$ changes sign and $\overline{h}<0$.
  \item $c>0$ case. The equation (\ref{def-KW-eq}) has a solution if and only if $h$ is positive somewhere.
  \item $c<0$ case. If (\ref{def-KW-eq}) has a solution, then $\overline{h}<0$. On the contrary, if $\overline{h}<0$, then there exists a constant $-\infty\leq c_-(h)<0$ depending only on $h$ such that (\ref{def-KW-eq}) has a solution for any $c\in(c_-(h),0)$, but has no solution for any $c<c_-(h)$. Moreover, if $h\leq0$ and $h\not\equiv0$, then $c_-(h)=-\infty$ and hence (\ref{def-KW-eq}) always has a solution.
\end{itemize}
In the following we shall call the equation (\ref{def-KW-eq}) ``Kazdan-Warner equation" (on graph). One can see from Grigor'yan, Lin and Yang's results, when $c$ is nonnegative, the solvability of (\ref{def-KW-eq}) has been understood completely. However, in the case when $c$ is negative, one still needs to know:
\begin{center}
Can one solve the Kazdan-Warner's equation (\ref{def-KW-eq}) when $c=c_-(h)$?
\end{center}
The main purpose of this paper is to answer the above question. We prove
\begin{theorem}\label{thm-main}
Consider the Kazdan-Warner equation (\ref{def-KW-eq}) with $c<0$ and $\overline{h}<0$. Suppose that $c_-(h)<0$ is given as above. If $c_-(h)=-\infty$, then $h\leq0$ and $h\not\equiv0$. If $c_-(h)>-\infty$, then there exists at least one solution to (\ref{def-KW-eq}) with $c=c_-(h)$.
\end{theorem}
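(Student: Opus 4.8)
The plan is to prove the contrapositive of the first assertion, namely that if $h$ is positive somewhere then $c_-(h)>-\infty$, and to obtain the second assertion by approximating $c_-(h)$ from above and passing to the limit. Both halves rest on the same a priori estimates for a solution $u$ of $\Delta u=c-he^u$ with $c<0$; since $C(V)\cong\mathbb R^{|V|}$ is finite dimensional, such estimates immediately yield compactness and there is no need for elliptic regularity. Throughout set $\mathrm{Vol}(V)=\sum_i\mu_i$ and write $V^\pm=\{i:\pm h_i>0\}$, $V^0=\{h=0\}$; note $\overline h<0$ forces $V^-\ne\varnothing$.

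The easy half of the estimate comes from the discrete maximum principle. At a vertex $r$ where $u$ attains its minimum, $\Delta u(r)\ge0$, so $h(r)e^{u(r)}\le c<0$; hence $h(r)<0$ and $|c|/\|h\|_\infty\le e^{\min u}\le |c|/\min_{V^-}|h|$. Thus $\min u=\log|c|+O(1)$, the $O(1)$ depending only on $h$. The same principle applied at a maximizing vertex $q$ gives $h(q)e^{u(q)}\ge c$; when $h(q)<0$ this yields $e^{\max u}\le|c|/\min_{V^-}|h|$, but when $h(q)\ge0$ the inequality is vacuous. So the whole difficulty is an upper bound for $u$ at peaks sitting on $V^+\cup V^0$, and this is the main obstacle.

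To control such peaks I would exploit the integral identity obtained by summing the equation against $\mu$: since $\sum_i\mu_i\Delta u_i=0$, one gets $\overline{he^u}=c$, i.e. $\int_{V^+}he^u-\int_{V^-}|h|e^u=c\,\mathrm{Vol}(V)$. Rescaling by $v=u-\log|c|$ turns the equation into $\Delta v=-|c|\,(1+he^v)$ with $\overline{he^v}=-1$ and, by the previous paragraph, $\min v$ bounded independently of $c$. Now suppose for contradiction that $\max v=:m$ is large. A peak at a vertex $q\in V^+$ is impossible: there $\Delta v(q)\le-|c|\,h(q)e^{m}$, while the weighted-neighbor lower bound gives $\Delta v(q)\ge-(W_q/\mu_q)(m-\min v)$, and combining the two forces $|c|\le C(m+1)e^{-m}\to0$, absurd whenever $|c|$ is bounded away from $0$. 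A peak on $V^0$ (or a compensating tall vertex on $V^-$ demanded by the identity $\overline{he^v}=-1$) is then excluded by propagating this bound along a path of $G$ to $V^+$, using connectivity together with the coupling in $\Delta v=-|c|(1+he^v)$. The upshot is a bound $\max u\le\log|c|+C(h)$ valid for every solution, and here the hypothesis $V^+\ne\varnothing$ is essential.

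With the two-sided bound $\log|c|-C\le u\le\log|c|+C$ in hand, both conclusions follow. For the first, let $c\to-\infty$: the $V^+$-peak computation shows $|c|$ cannot be arbitrarily large unless $V^+=\varnothing$, so $c_-(h)>-\infty$ whenever $h$ is positive somewhere; contrapositively $c_-(h)=-\infty$ implies $h\le0$, and $h\not\equiv0$ is immediate from $\overline h<0$. For the second, assume $c_-(h)>-\infty$ and pick $c_k\downarrow c_-(h)$ with solutions $u_k$ (these exist since $c_k\in(c_-(h),0)$). On the compact range $|c|\in[\,|c_1|,|c_-(h)|\,]$ the bounds above are uniform, so $\{u_k\}$ is bounded in $C(V)$; passing to a convergent subsequence $u_k\to u_*$ and using continuity of $\Delta$ and of $t\mapsto e^t$ gives $\Delta u_*=c_-(h)-he^{u_*}$, a solution at the threshold. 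I expect the delicate point to be exactly the uniform upper bound of the third paragraph—ruling out boundary-case blow-up at peaks on $V^0$—since the maximum principle alone is silent there and one must squeeze the necessary information out of the global identity $\overline{he^v}=-1$ and the connectivity of $G$.
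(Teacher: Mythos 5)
Your proposal diverges from the paper on both halves, and the first half has a genuine gap. You claim that letting $c\to-\infty$, ``the $V^+$-peak computation shows $|c|$ cannot be arbitrarily large unless $V^+=\varnothing$,'' but that computation only applies when the maximum of $v=u-\log|c|$ is attained \emph{at} a vertex of $V^+$, and nothing forces this: at a maximum lying on $V^0$ the equation merely says $\Delta v=-|c|<0$, which is perfectly consistent. Such peaks are real, not a boundary case to be waved away. On the two-vertex graph with $\mu\equiv1$, $\omega_{12}=1$, $h=(0,-1)$, the unique solution is $u_2=\log(2|c|)$, $u_1=\log(2|c|)+|c|$, so $v$ has a peak on $V^0$ of height $|c|+\log 2$; moreover this example shows that your propagation step loses an additive $|c|\mu_j/\omega_{jl}$ per edge, exactly the size of genuine oscillations. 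Hence when $|c|\to\infty$ the propagation constants grow linearly in $|c|$, the factor $e^{-(m-kC|c|)}$ at a $V^+$ vertex reached after $k$ steps is no longer small, and no contradiction results: the uniform-in-$c$ bound $\max u\le\log|c|+C(h)$ cannot be obtained this way. (Your parenthetical upper bound $e^{\min u}\le|c|/\min_{V^-}|h|$ also does not follow from the minimum-vertex argument, though it is not load-bearing.) The first assertion can still be proved in your maximum-principle spirit, but by a purely local argument that avoids any case analysis on where the peak sits: writing $s=u_{i_0}-\min u$ at a fixed $i_0$ with $h_{i_0}>0$ and $W_{i_0}=\sum_{j\sim i_0}\omega_{i_0 j}$, combine $-\Delta u(i_0)=|c|+h_{i_0}e^{u_{i_0}}\ge \frac{h_{i_0}}{\|h\|_\infty}|c|\,e^{s}$ (using $e^{\min u}\ge|c|/\|h\|_\infty$) with $-\Delta u(i_0)\le \frac{W_{i_0}}{\mu_{i_0}}s$ and $se^{-s}\le e^{-1}$ to get $|c|\le \frac{W_{i_0}\|h\|_\infty}{e\,\mu_{i_0}h_{i_0}}$, so $c_-(h)>-\infty$ whenever $h$ is positive somewhere. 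The paper instead proves this half by a comparison argument with no maximum principle at all: Lemma 2.1 ($e^{-u}\le\varphi$ where $(\Delta+c)\varphi=h$, via convexity and order-preservation of $-(\Delta+c)^{-1}$) plus Lemma 2.2 ($c\varphi\to h$), which make $\varphi_{i_0}<0$ for large $|c|$ while $\varphi_{i_0}\ge e^{-u_{i_0}}>0$.

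For the second half your scheme is sound in outline but unfinished at exactly the point you flag as delicate, and it is genuinely different from the paper's. You want an a priori bound for \emph{all} solutions with $|c|\in[\,|c_1|,|c_-(h)|\,]$; since $|c|$ is now bounded above and below, the propagation losses per edge are bounded constants, and a path from a $V^0$ peak to the first vertex with $h\ne0$ does close the argument (at a $V^-$ vertex near the peak one gets $|c|\,|h_j|e^{v_j}\le C$, at a $V^+$ vertex one gets $|c|h_je^{v_j}\le C(m+1)$, either of which bounds $m$) --- but this is the step you left as a sketch, so as written the existence at $c=c_-(h)$ is not proved. The paper takes a different route: it constructs the solutions $u^k$ for $c_k\downarrow c_-(h)$ as minimizers of $I_k$ over the order interval between a constant sub-solution $A$ and super-solutions $\psi^k$ (solutions for slightly smaller $\widetilde{c_k}$), and the crucial uniform upper bound comes not from the equation but from the second-variation inequality $0\le\nabla^2_{ii}I_k=\sum_{j\sim i}\omega_{ij}-h_i\mu_ie^{u^k_i}$ at the vertex with $h_i>0$, after which it propagates along the connected graph just as you do. Your approach, if completed, buys a stronger statement (compactness of the full solution set on compact $c$-ranges, independent of how solutions are produced), whereas the paper's stability trick sidesteps the peak-location analysis entirely; but in its present form your proposal establishes neither assertion.
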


In the following of this paper, we prove Theorem \ref{thm-main} by using variational principles and the method of upper and lower solutions. We follow the approach pioneered by Chen and Li \cite{CL}, and Kazdan and Warner \cite{KW}.

\section{The proof of Theorem \ref{thm-main}}

\begin{lemma}\label{lem-1}
Consider the Kazdan-Warner equation (\ref{def-KW-eq}) with $c<0$. If it has a solution $u$, then the unique solution $\varphi$ to
\begin{equation}
(\Delta+c)\varphi=h
\end{equation}
satisfies $\varphi\geq e^{-u}$.
\end{lemma}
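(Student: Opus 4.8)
The plan is to first establish that the operator $\Delta+c$ is invertible, so that the solution $\varphi$ is well-defined. Since $G$ is a connected finite graph, the Laplacian $\Delta$ is a symmetric operator whose eigenvalues are all nonpositive (the largest being $0$ with constant eigenfunctions). Hence for $c<0$ the operator $\Delta+c$ has all eigenvalues strictly negative, making it invertible; this gives existence and uniqueness of $\varphi$ solving $(\Delta+c)\varphi=h$.

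Next I would introduce the function $w=e^{-u}$ and compute how $\Delta+c$ acts on it, with the goal of comparing $w$ against $\varphi$. From the Kazdan-Warner equation $\Delta u=c-he^u$, I would derive an inequality for $\Delta w$. The key computation uses the pointwise identity for the graph Laplacian applied to a composition: at each vertex $i$,
\begin{equation*}
\Delta w_i=\frac{1}{\mu_i}\sum_{j\sim i}\omega_{ij}\bigl(e^{-u_j}-e^{-u_i}\bigr).
\end{equation*}
By the convexity of the exponential function, $e^{-u_j}-e^{-u_i}\geq e^{-u_i}(-u_j+u_i)=-e^{-u_i}(u_j-u_i)$, so summing yields $\Delta w_i\geq -e^{-u_i}\Delta u_i$. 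Substituting $\Delta u_i=c-h_ie^{u_i}$ gives $\Delta w_i\geq -e^{-u_i}(c-h_ie^{u_i})=h_i-ce^{-u_i}=h_i-cw_i$. Rearranging, this reads $(\Delta+c)w\geq h$ pointwise on $V$.

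The final step is a comparison (maximum principle) argument. Setting $\psi=\varphi-w$, the two relations $(\Delta+c)\varphi=h$ and $(\Delta+c)w\geq h$ subtract to give $(\Delta+c)\psi\leq 0$, i.e. $\Delta\psi\leq -c\psi=|c|\psi$. I would then argue that this forces $\psi\geq 0$: evaluating at a vertex $i_0$ where $\psi$ attains its minimum, one has $\Delta\psi_{i_0}\geq 0$ (since $\psi_{i_0}$ is minimal, every neighbor term $\psi_j-\psi_{i_0}\geq 0$), so the inequality $\Delta\psi_{i_0}\leq |c|\psi_{i_0}$ forces $0\leq|c|\psi_{i_0}$, whence $\psi_{i_0}\geq0$ because $|c|>0$. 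Since the minimum is nonnegative, $\psi\geq0$ everywhere, that is $\varphi\geq w=e^{-u}$, which is the claim.

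I expect the main obstacle to be the convexity inequality step making sure the direction of the inequality is tracked correctly through the substitution of $\Delta u$, since a sign error there would invert the final comparison. The maximum-principle conclusion is clean on a finite graph because the minimum is genuinely attained at a vertex, so no boundary or regularity subtleties arise; the only point requiring care is that $c<0$ (equivalently $|c|>0$) is precisely what prevents the degenerate case and delivers $\psi\geq0$ rather than merely $\psi$ constant.
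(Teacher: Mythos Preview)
Your proof is correct and follows essentially the same route as the paper: both use the elementary inequality $e^{x}-1\geq x$ (your convexity step) to obtain $(\Delta+c)e^{-u}\geq h=(\Delta+c)\varphi$, and then appeal to the order-reversing property of $(\Delta+c)^{-1}$ to conclude $\varphi\geq e^{-u}$. The only difference is cosmetic: the paper cites this monotonicity from an external reference, whereas you supply the self-contained minimum-point argument (and also spell out the invertibility of $\Delta+c$).
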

\begin{proof}
Using $e^x-1\geq x$ for any $x\in \mathds{R}$, we have $e^{u_i-u_j}-1\geq u_i-u_j$ and further
$$\omega_{ij}(e^{-u_j}-e^{-u_i})\geq e^{-u_i}\omega_{ij}(u_i-u_j).$$
Then for each $i\in V$, it follows
$$(\Delta e^{-u})_i\geq-e^{-u_i}(\Delta u)_i=-ce^{-u_i}+\Delta\varphi_i+c\varphi_i.$$
Hence
$$(\Delta+c)e^{-u}\geq(\Delta+c)\varphi.$$
Theorem 2.1 in \cite{Ge} tells us that the operator $-(\Delta+c)^{-1}$ preserves order. Hence we obtain
$\varphi\geq e^{-u}.$\qed
\end{proof}

\begin{lemma}\label{lem-2}
Given $f\in C(V)$, let $\varphi$ be the unique solution to the equation $(\Delta+c)\varphi=f$. Then
$$\lim\limits_{c\rightarrow -\infty}c\varphi=f.$$
\end{lemma}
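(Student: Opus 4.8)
The plan is to reduce the claim to showing that $\varphi\to 0$ as $c\to-\infty$. Rewriting the defining equation $(\Delta+c)\varphi=f$ as $c\varphi=f-\Delta\varphi$, it suffices to prove that $\Delta\varphi\to0$. Since $V$ is finite and $\Delta$ is a fixed linear operator on the finite-dimensional space $C(V)$, it is continuous, so once I establish $\varphi\to0$ in the sup-norm it follows that $\Delta\varphi\to0$ and hence $c\varphi=f-\Delta\varphi\to f$, which is exactly the assertion.

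The key step is therefore the uniform estimate $\|\varphi\|_\infty\le\|f\|_\infty/|c|$, which I would obtain from a discrete maximum principle. Writing $\|g\|_\infty=\max_{i\in V}|g_i|$, let $i_0$ be a vertex where $\varphi$ attains its maximum $M$. Because $\varphi_j\le\varphi_{i_0}$ for every neighbour $j$, the definition of $\Delta$ gives $\Delta\varphi_{i_0}\le0$, whence $cM=c\varphi_{i_0}=f_{i_0}-\Delta\varphi_{i_0}\ge f_{i_0}\ge-\|f\|_\infty$; dividing by $c<0$ yields $M\le\|f\|_\infty/|c|$. Running the same argument at a minimizing vertex gives $\min_{i}\varphi_i\ge-\|f\|_\infty/|c|$. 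Combining the two bounds, $\|\varphi\|_\infty\le\|f\|_\infty/|c|\to0$ as $c\to-\infty$, as required.

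I expect no serious obstacle here; the only point requiring care is that the convergence must be uniform over all vertices, so that $\Delta\varphi$, which couples neighbouring values, also tends to $0$, and this is precisely what the sup-norm estimate provides. A slicker, purely linear-algebraic alternative I would mention is to factor $\Delta+c=c(I+c^{-1}\Delta)$, so that $c\varphi=c(\Delta+c)^{-1}f=(I+c^{-1}\Delta)^{-1}f$; since $c^{-1}\Delta\to0$ in operator norm and matrix inversion is continuous at the invertible matrix $I$, one gets $(I+c^{-1}\Delta)^{-1}\to I$ and therefore $c\varphi\to f$. Equivalently, expanding $f$ in an orthonormal eigenbasis $\{e_k\}$ of the symmetric operator $\Delta$ with $\Delta e_k=\lambda_k e_k$ gives $c\varphi=\sum_k\frac{c}{\lambda_k+c}\langle f,e_k\rangle e_k$, and each coefficient $\frac{c}{\lambda_k+c}\to1$.
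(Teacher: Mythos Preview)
Your argument is correct. The paper's own proof is a single sentence: ``one can get the conclusion easily by orthogonally diagonalization method'' --- precisely the eigenbasis expansion $c\varphi=\sum_k\frac{c}{\lambda_k+c}\langle f,e_k\rangle e_k$ that you record as your last alternative. Your lead argument, however, takes a genuinely different route: rather than invoking the spectral structure of $\Delta$ (and hence its self-adjointness with respect to the $\mu$-weighted inner product), you use the discrete maximum principle to obtain the explicit bound $\|\varphi\|_\infty\le\|f\|_\infty/|c|$ and then feed it back through $c\varphi=f-\Delta\varphi$. This is more elementary, yields a convergence rate, and would survive in settings where $\Delta$ is not symmetric. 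The Neumann-series variant $c\varphi=(I+c^{-1}\Delta)^{-1}f\to f$ is also fine and is essentially a coordinate-free packaging of the diagonalization argument.
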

\begin{proof}
Note the unique solution $\varphi$ depends on $c$. One can get the conclusion easily by orthogonally diagonalization method. \qed
\end{proof}

\begin{theorem}\label{thm-first-part}
If the Kazdan-Warner equation (\ref{def-KW-eq}) has at least a solution for any $c<0$, then $h\leq0$ and $h\not\equiv0$.
\end{theorem}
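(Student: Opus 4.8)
The plan is to combine the two preceding lemmas: Lemma~\ref{lem-1} forces the auxiliary function to be positive, while Lemma~\ref{lem-2} recovers $h$ as a limit of these positive functions multiplied by the negative parameter $c$. The crucial point is that the hypothesis supplies a solution for \emph{every} $c<0$, so the positivity obtained from Lemma~\ref{lem-1} holds uniformly in $c$ and survives the passage to the limit $c\to-\infty$.

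First I would fix an arbitrary $c<0$. By hypothesis the equation (\ref{def-KW-eq}) has a solution $u=u_c$ for this $c$, so Lemma~\ref{lem-1} applies: denoting by $\varphi_c$ the unique solution of $(\Delta+c)\varphi_c=h$ (which exists and is unique because $\Delta$ is negative semidefinite, so $\Delta+c$ is invertible for $c<0$), we obtain $\varphi_c\geq e^{-u_c}>0$ at every vertex. Thus $\varphi_c>0$ pointwise, and since $c<0$ the product satisfies $c\varphi_c<0$ at every vertex, for each $c<0$.

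Next I would send $c\to-\infty$. Lemma~\ref{lem-2} with $f=h$ gives $c\varphi_c\to h$ pointwise. Because each $c\varphi_c$ is strictly negative, the limit can only be nonpositive, i.e.\ $h\leq0$. To exclude $h\equiv0$, I would argue by contradiction: if $h\equiv0$ then (\ref{def-KW-eq}) reduces to $\Delta u=c$; summing against the measure $\mu$ and using that $\sum_i\mu_i(\Delta u)_i=\sum_{i}\sum_{j\sim i}\omega_{ij}(u_j-u_i)=0$ by the symmetry of $\omega$, we would get $c\sum_i\mu_i=0$, forcing $c=0$ and contradicting $c<0$. Hence $h\not\equiv0$.

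The argument is essentially immediate once the two lemmas are in place, so I do not anticipate a genuine obstacle. The only step that needs attention is the sign bookkeeping in the limit: $c\varphi_c$ is a negative scalar times a positive function and therefore stays negative for every $c<0$, so its limit is $\leq0$. One should also record explicitly that $\varphi_c$ is well defined for all $c<0$, which is exactly what makes both invoked lemmas applicable simultaneously across the whole range of $c$.
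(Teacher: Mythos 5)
Your proof is correct and takes essentially the same route as the paper: both combine Lemma~\ref{lem-1} (positivity $\varphi_c\geq e^{-u_c}>0$) with Lemma~\ref{lem-2} (the limit $c\varphi_c\to h$), the only cosmetic difference being that you argue directly that the limit of the negative functions $c\varphi_c$ is nonpositive, while the paper phrases the same sign bookkeeping as a contradiction at a vertex where $h_i>0$. Your explicit argument for $h\not\equiv0$ (summing $\Delta u=c$ against $\mu$ and using symmetry of $\omega$) correctly fills in a detail the paper dismisses as ``easy to see.''
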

\begin{proof}
It's easy to see $h\not\equiv0$. We prove $h\leq0$ by contradiction. Assume there is a $i\in V$, such that $h_i>0$. Let $\varphi$ (depending on $c$) be the unique
solution to the equation $(\Delta+c)\varphi=h$. By Lemma \ref{lem-1}$, c\varphi_i$ tends to $h_i$ as $c$ tends to $-\infty$. This implies $\varphi_i<0$. However, by Lemma \ref{lem-2}, $\varphi_i\geq e^{-u_i}>0$, which contradicts $\varphi_i<0$. Hence $h\leq0$.\qed
\end{proof}

By Theorem \ref{thm-first-part} and Grigor'yan, Lin and Yang's results, we have
\begin{corollary}\label{cor-h<0}
Consider the Kazdan-Warner equation (\ref{def-KW-eq}) with $c<0$ and $\overline{h}<0$. Then $c_-(h)=-\infty$ if and only if $h\leq0$ and $h\not\equiv0$.
\end{corollary}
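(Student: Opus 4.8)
The plan is to obtain this corollary as an immediate packaging of two facts already in hand: the $c<0$ portion of Grigor'yan--Lin--Yang's dichotomy summarized above, and the freshly proved Theorem \ref{thm-first-part}. Since the statement is an equivalence, I would dispose of the two implications separately, and I expect neither to require new analysis.

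For the reverse implication, suppose $h\leq 0$ and $h\not\equiv 0$. This is verbatim the hypothesis of the final assertion in the $c<0$ case of \cite{GLY}, whose conclusion is exactly $c_-(h)=-\infty$. Hence this direction is nothing more than a citation; no argument beyond quoting \cite{GLY} is needed.

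For the forward implication, suppose $c_-(h)=-\infty$. Here I would unwind the \emph{definition} of $c_-(h)$ furnished by \cite{GLY}: under the standing assumption $\overline{h}<0$, equation (\ref{def-KW-eq}) is solvable for every $c\in(c_-(h),0)$. Setting $c_-(h)=-\infty$ collapses this interval to the whole negative axis $(-\infty,0)$, so (\ref{def-KW-eq}) admits a solution for \emph{every} $c<0$. This is precisely the hypothesis of Theorem \ref{thm-first-part}, and its conclusion $h\leq 0$, $h\not\equiv 0$ is exactly what we want.

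Since both directions reduce to quoting results already established, there is no genuine obstacle at this stage; the substantive content has already been absorbed into Lemmas \ref{lem-1}--\ref{lem-2} and Theorem \ref{thm-first-part}. The only point deserving a moment's care is the logical bookkeeping, namely that the symbol ``$c_-(h)=-\infty$'' must be read through \cite{GLY} as solvability on all of $(-\infty,0)$, so that the hypothesis of Theorem \ref{thm-first-part} is met word for word.
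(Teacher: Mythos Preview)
Your proposal is correct and matches the paper's approach exactly: the paper states the corollary with the single line ``By Theorem \ref{thm-first-part} and Grigor'yan, Lin and Yang's results,'' which is precisely the two citations you unpack. Your only addition is making explicit that $c_-(h)=-\infty$ translates, via the definition in \cite{GLY}, into solvability for all $c<0$, which is indeed the one logical hinge worth noting.
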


In the following of this paper, we assume $-\infty<c_-(h)<0$. This is equivalent to say $\overline{h}<0$ and there is a $i\in V$, such that $h_i>0$. We shall prove the Kazdan-Warner equation (\ref{def-KW-eq}) has a solution for $c=c_-(h)$.

Recall that $u_+$ is called a super solution to the Kazdan-Warner equation (\ref{def-KW-eq}) if
$$\Delta u_+-c+he^{u_+}\leq0,$$
while $u_-$ is called a sub solution to (\ref{def-KW-eq}) if
$$\Delta u_--c+he^{u_-}\geq0.$$
For any $f\in C(V)$, we define an integral of $f$ over $V$ with respect to the vertex weight $\mu$ by
$$\int_Vfd\mu=\sum\limits_{i\in V}\mu_if_i.$$
Thus $\overline{f}=\int_Vfd\mu\big/\int_Vd\mu$. Similarly, for any function $g$ defined on the edge set $E$, we define an integral of $g$ over $E$ with respect to the edge weight $\omega$ by
$$\int_Egd\omega=\sum\limits_{i\thicksim j}\omega_{ij}g_{ij}.$$

Now we choose a sequence of numbers $c_k$ such that
\begin{center}
$0>c_k>c_-(h)$, and $c_k\rightarrow c_-(h)$, as $k\rightarrow \infty$.
\end{center}
The main idea for the final proof is to find a solution $u^k$ to the following equation
\begin{equation}\label{equ-ck}
\Delta u=c_k-he^u,
\end{equation}
such that all $u^k$ are uniformly bounded. Therefore we can choose a subsequence of $u^k$ that tends to some $\hat{u}\in C(V)$. Taking limit in the above equation (\ref{equ-ck}), we show that $\hat{u}$ is a solution to the Kazdan-Warner equation (\ref{def-KW-eq}). The main difficulty is to choose such $u^k$. We shall prove the existence of such $u^k$ by the following four steps.

\textbf{Step 1.} Find a real number $A$, which is a sub solution to (\ref{equ-ck}) for any $k$.

In fact, for any $A$ satisfying $A<\ln\big(h_m^{-1}\sup\limits_{k}c_k\big)$, where $h_m<0$ is the minimum value of $h$, there holds
$$\Delta A-c_k+he^A>0.$$

\textbf{Step 2.} Find a $\psi^k$, which is a supper solution to (\ref{equ-ck}) for any $k$.

In fact, for each $k$, fix a $\widetilde{c_k}$ with $0>c_k>\widetilde{c_k}>c_-(h)$. By Grigor'yan, Lin and Yang's results, the equation $\Delta u=\widetilde{c_k}-he^u$ has at least a solution $\psi^k$, that is, $\Delta \psi^k=\widetilde{c_k}-he^{\psi^k}$. Then it follows
$$\Delta \psi^k-c_k+he^{\psi^k}<0,$$
that is, $\psi^k$ is a super solution to the equation (\ref{equ-ck}).

\begin{proposition}\label{claim-A<psi^k}
For each $k$, we have $A<\psi^k$.
\end{proposition}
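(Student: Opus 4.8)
The plan is to argue by contradiction using the maximum principle on the finite vertex set $V$, exploiting crucially that $A$ is a constant. Suppose the claim fails; since $V$ is finite, $\psi^k-A$ attains its minimum at some vertex $i_0$, and the failure of the claim means $\psi^k_{i_0}\le A$. Because $A$ is constant, $i_0$ is simultaneously a minimizer of $\psi^k$ itself, so every neighbour $j\sim i_0$ satisfies $\psi^k_j\ge\psi^k_{i_0}$, and therefore $\Delta\psi^k_{i_0}=\frac{1}{\mu_{i_0}}\sum_{j\sim i_0}\omega_{i_0 j}(\psi^k_j-\psi^k_{i_0})\ge0$. Recording this sign is the whole engine of the proof.

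Next I would feed this into the super and sub solution inequalities evaluated at $i_0$. By Step 2, $\psi^k$ is a strict super solution, so $\Delta\psi^k_{i_0}-c_k+h_{i_0}e^{\psi^k_{i_0}}<0$; combining with $\Delta\psi^k_{i_0}\ge0$ yields $h_{i_0}e^{\psi^k_{i_0}}<c_k<0$, which in particular forces $h_{i_0}<0$. On the other hand, since $A$ is constant we have $\Delta A=0$, so the (strict) sub solution inequality from Step 1 reads $-c_k+h_{i_0}e^{A}>0$, i.e.\ $h_{i_0}e^{A}>c_k$. Chaining the two estimates gives $h_{i_0}e^{\psi^k_{i_0}}<c_k<h_{i_0}e^{A}$.

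Finally, I would divide the inequality $h_{i_0}e^{\psi^k_{i_0}}<h_{i_0}e^{A}$ by $h_{i_0}<0$, reversing its direction, to obtain $e^{\psi^k_{i_0}}>e^{A}$, hence $\psi^k_{i_0}>A$. This contradicts $\psi^k_{i_0}\le A$, so the minimum of $\psi^k-A$ is strictly positive, which is exactly $A<\psi^k$ on all of $V$.

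The point requiring care---and the reason a naive comparison principle does not apply directly---is that in the regime $-\infty<c_-(h)<0$ the function $h$ changes sign, so one cannot simply assert that a sub solution lies below a super solution. The argument closes only because evaluating at the minimum of $\psi^k$ produces $\Delta\psi^k_{i_0}\ge0$, and this is precisely what forces $h_{i_0}<0$; it is the negativity of $h$ at that specific vertex that legitimizes the final division by $h_{i_0}$. I expect the sign bookkeeping and the careful use of the strictness inherited from Steps 1 and 2 to be the main, if modest, obstacle.
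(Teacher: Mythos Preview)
Your proof is correct and follows essentially the same route as the paper: pick the vertex where $\psi^k$ is minimal, use $\Delta\psi^k\ge 0$ there together with the strict super solution inequality to force $h<0$ at that vertex, then combine with the strict sub solution inequality for $A$ and divide by the negative $h$ to get $\psi^k>A$ at the minimum. The only cosmetic difference is that you wrap the argument in a contradiction hypothesis $\psi^k_{i_0}\le A$ which is never actually used; the chain $h_{i_0}e^{\psi^k_{i_0}}<c_k<h_{i_0}e^{A}$ already gives $\psi^k_{i_0}>A$ directly, so the paper simply states it as a direct argument.
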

\begin{proof}
Fix $k$. Since the graph $G$ is finite, we may choose a vertex $i\in V$, such that $\psi^k_i=\min\limits_{j\in V}\psi^k_j$. We just need to prove $A<\psi^k_i$. It's easy to see $\Delta\psi^k_i\geq0$. By
$$ h_i e^{\psi^k_i}=\widetilde{c_k}-\Delta\psi^k_i\leq\widetilde{c_k}<0$$
we see $h_i<0$. Further note
$$ h_i e^A> c_k>\Delta\psi^k_i+h_ie^{\psi^k_i},$$
we obtain
$$h_i(e^A-e^{\psi^k_i})>\Delta\psi^k_i\geq0,$$
hence $e^A<e^{\psi^k_i}$ and then $A<\psi^k_i$.\qed
\end{proof}

\textbf{Step 3.} Find a $u^k$ for each $k$, such that $A<u^k<\psi^k$, $u^k$ is a solution to the equation (\ref{equ-ck}), and $u^k$ minimize the following functional
$$I_k(f)=\frac{1}{2}\int_E|\nabla f|^2d\omega+c_k\int_Vfd\mu-\int_Vhe^fd\mu,\;\;A\leq f\leq\psi^k.$$
Since $\{f:A\leq f\leq \psi^k\}$ is a compact set of a finite dimensional vector space, $I_k(f)$ attains its minimum at some $u^k\in \{f:A\leq f\leq \psi^k\}$. We first show that
$$A<u^k<\psi^k.$$
In fact, if $u^k_i=A$ at some vertex $i\in V$, then
$$0\leq \nabla_iI_k(f)\big|_{f=u^k}=-\mu_i\left(\Delta u^k_i-c_k+h_ie^{u^k_i}\right).$$
Hence
$$\Delta u^k_i-c_k+h_ie^{u^k_i}\leq0.$$
Note $u^k\geq A$, hence
$$\Delta u_i^k=\frac{1}{\mu_i}\sum\limits_{i\thicksim j}\omega_{ij}(u_j^k-u_i^k)\geq0.$$
However,
$$\Delta u_i^k\leq c_k-h_ie^{u_i^k}=c_k-h_ie^A<\Delta A=0,$$
which is a contradiction. Therefore we obtain $A<u^k$. Similarly, if $u^k_i=\psi_i^k$ at some vertex $i\in V$,
then $0\geq \nabla_iI_k(f)\big|_{f=u^k}$, and it follows $\Delta u^k_i-c_k+h_ie^{u^k_i}\geq0$. Note $u^k\leq \psi^k$, hence
$$\Delta (\psi^k-u^k)_i=\frac{1}{\mu_i}\sum\limits_{i\thicksim j}\omega_{ij}\left(\big(\psi^k_j-u^k_j\big)-
\big(\psi^k_i-u^k_i\big)\right)\geq0.$$
However,
$$\Delta(\psi^k-u^k)_i=(\Delta \psi^k_i-c_k+h_ie^{\psi^k_i})-(\Delta u^k_i-c_k+h_ie^{u^k_i})<0,$$
which is a contradiction. Hence we obtain $u^k<\psi^k$. Then we show $u^k$ is a solution to the equation (\ref{equ-ck}). In fact, calculate the Euler-Lagrange equation of $I_k(f)$ at $u^k$, we obtain
$$0=\frac{d}{dt}\Big|_{t=0}I_k(u^k+t\phi)=\int_V(-\Delta u^k+c_k-he^{u^k})\phi d\mu$$
for any function $\phi$. Hence
$$-\Delta u^k+c_k-he^{u^k}=0,$$
which implies that $u^k$ is a solution to the equation (\ref{equ-ck}). Thus we finish this step.

\textbf{Step 4.} We show all $u^k$ are uniformly bounded. We just need to show all $u^k$ are uniformly bounded above. By the assumption of $c_-(h)>-\infty$, there exists at least one vertex $i\in V$ such that $h_i>0$. Since $u^k$ is the minimum point of $I_k$, then
$$0\leq\nabla^2_{ii}I_k(f)\big|_{f=u^k}=\sum\limits_{i\thicksim j}\omega_{ij}-h_i\mu_ie^{u^k_i}.$$
This leads to
$$e^{u^k_i}\leq\frac{1}{h_i\mu_i}\sum\limits_{i\thicksim j}\omega_{ij},$$
which implies that all $A\leq u^k_i\leq C_{h,G}$ are bounded above by some universal constant $C_{h,G}$ depending only on the information of $G$ and $h$. Now by
$$\frac{1}{\mu_i}\sum\limits_{i\thicksim j}\omega_{ij}(u_j^k-u_i^k)=\Delta u_i^k=c_k-h_ie^{u^k_i},$$
it's easy to see that for every vertex $j$ with $j\thicksim i$, $u^k_j$ is bounded above by some universal constant $C_{h,A,G}$. Since the graph $G$ is finite and connected, we derive $A\leq u_j^k\leq C_{h,A,G}$ for all $j\in V$ and all $k\in\mathds{N}$ by induction.

Since all $u^k$ are uniformly bounded, we may choose a subsequence $u^{k_n}$, such that
$$u^{k_n}\rightarrow \hat{u}$$
as $n\rightarrow\infty$. Let $n\rightarrow \infty$ in the following equation
$$\Delta u^{k_n}=c_k-he^{u^{k_n}},$$
we obtain
$$\Delta \hat{u}=c_-(h)-he^{\hat{u}}.$$
Thus the Kazdan-Warner equation (\ref{def-KW-eq}) has at least one solution $\hat{u}$ for $c=c_-(h)$.\\

\noindent \textbf{Acknowledgements:} The author would like to thank Professor Gang Tian and Yanxun Chang for constant encouragement. The author would also like to thank Professor Congming Li and Dr. Wenshuai Jiang for many helpful conversations. The research is supported by National Natural Science Foundation of China under Grant No.11501027.

Huabin Ge: hbge@bjtu.edu.cn

Department of Mathematics, Beijing Jiaotong University, Beijing 100044, P.R. China
\end{document}